\newcommand{\dual}[1]{{#1}^\vee}
\newcommand{\set}[2]{\left\{ #1 \mid #2 \right\}}
\newcommand{\defn}{\emph}
\newcommand{\gen}[1]{\langle #1 \rangle}
\renewcommand{\c}[1]{{#1}^c}
\newtheorem{theorem}{Theorem}
\newtheorem{proposition}[theorem]{Proposition}
\theoremstyle{definition}
\newtheorem{definition}[theorem]{Definition}
\newtheorem{definition/lemma}[theorem]{Definition/Lemma}
\newtheorem{remark}[theorem]{Remark}
\newtheorem{example}[theorem]{Example}
\newtheorem{problem}[theorem]{Problem}
\begin{document}

\title{Shellability and the strong gcd-condition}
\author{Alexander Berglund}
\address{Department Of Mathematics\\
Stockholm University\\
SE-106 91 Stockholm\\
Sweden}
\email{alexb@math.su.se}

\begin{abstract}
Shellability is a well-known combinatorial criterion for verifying that a simplicial complex is Cohen-Macaulay. Another notion familiar to commutative algebraists, but which has not received as much attention from combinatorialists as the Cohen-Macaulay property, is the notion of a \emph{Golod ring}. Recently in \cite{BJ}, a criterion on simplicial complexes reminiscent of shellability, called the \defn{strong gcd-condition}, was shown to imply Golodness of the associated Stanley-Reisner ring. The two algebraic notions were tied together by Herzog, Reiner and Welker, \cite{HRW}, who showed that if the Alexander dual $\Delta^\vee$ is sequentially Cohen-Macaulay then $\Delta$ is Golod. In this paper, we present a combinatorial companion of this result, namely that if $\Delta^\vee$ is (non-pure) shellable then $\Delta$ satisfies the strong gcd-condition. Moreover, we show that all implications just mentioned are strict in general but that they are equivalences if $\Delta$ is a flag complex.
\end{abstract}

\maketitle

\section*{Introduction}
Let $\Delta$ be a finite simplicial complex with vertex set $V = \{v_1,\ldots,v_n\}$. We assume that there are no `ghost vertices', that is, we assume that $\{v_i\}\in \Delta$ for all $i$. Let $k$ be a field. Recall that the \defn{Stanley-Reisner ring} associated to $\Delta$ is the quotient
$$k[\Delta] = k[x_1,\ldots,x_n]/I_\Delta,$$
where $I_\Delta$ is the ideal in the polynomial ring $k[x_1,\ldots,x_n]$ generated by the monomials $x_{i_1}\ldots x_{i_r}$ for which $\{v_{i_1},\ldots,v_{i_r}\}\not\in \Delta$. We will say that $\Delta$ is sequentially Cohen-Macaulay, or Golod, if the Stanley-Reisner ring $k[\Delta]$ has that property. For the definitions of these notions from commutative algebra, see \cite{stanley} Definition III.2.9 and \cite{Avramov2} p.~ 42, respectively. Roughly speaking, the ring $k[\Delta]$ is Golod if the ranks of the modules in the minimal free resolution of the field $k$, viewed as a module over $k[\Delta]$, have the fastest possible growth. There are several equivalent characterizations of Golod rings, see for instance Sections 5.2 and 10.3 in \cite{Avramov2} and the references therein. For Stanley-Reisner rings one can obtain even nicer characterizations, see \cite{B}, \cite{BBH}, \cite{BJ}.

In this paper, we will be interested in the combinatorial companions of these notions: shellability and the strong gcd-condition. Let us begin by recalling their definitions. If $F_1,\ldots,F_r \subseteq V$ then let
$$\gen{F_1,\ldots,F_r}$$
denote the simplicial complex generated by $F_1,\ldots,F_r$. It consists of all subsets $F\subseteq V$ such that $F\subseteq F_i$ for some $i$.
\begin{definition}[Bj\"orner, Wachs \cite{BW}] \label{def:shellable}
A simplicial complex $\Delta$ is called \defn{shellable} if the facets of $\Delta$ admit a \defn{shelling order}. A shelling order is a linear order, $F_1,\ldots, F_r$, of the facets of $\Delta$ such that for $2\leq i \leq r$, the simplicial complex
$$\gen{F_i}\cap \gen{F_1,\ldots, F_{i-1}}$$
is pure of dimension $\dim(F_i) - 1$.
\end{definition}
What we call shellability here is sometimes referred to as \emph{non-pure} shellability since it is not assumed that the complex $\Delta$ is pure. As is well known and widely exploited, shellability is a combinatorial criterion for verifying that a pure complex is Cohen-Macaulay. The notion of sequentially Cohen-Macaulay complexes, due to Stanley, was conceived as a non-pure generalization of the notion of Cohen-Macaulay complexes that would make the following proposition true:

\begin{proposition}[Stanley, \cite{stanley}] \label{prop:shellable-CM}
A shellable simplicial complex is sequentially Cohen-Macaulay.
\end{proposition}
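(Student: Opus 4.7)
My plan is to reduce the statement to the classical fact that pure shellable complexes are Cohen-Macaulay, by way of Duval's combinatorial reformulation of sequential Cohen-Macaulayness. That reformulation says: $\Delta$ is sequentially Cohen-Macaulay over $k$ if and only if, for every $j\geq 0$, the pure subcomplex $\Delta^{[j]}$ generated by the $j$-dimensional faces of $\Delta$ is Cohen-Macaulay over $k$. Granted this, it suffices to show that whenever $\Delta$ is shellable, each $\Delta^{[j]}$ is pure shellable.

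To build a pure shelling of $\Delta^{[j]}$, start from a shelling $F_1,\ldots,F_r$ of $\Delta$. The facets of $\Delta^{[j]}$ are exactly the $j$-faces of $\Delta$, so I would enumerate them $G_1,\ldots,G_s$ by running through $F_1,\ldots,F_r$ in the shelling order and, at step $i$, appending in any convenient order (for instance, one compatible with a shelling of the boundary simplex of $F_i$) those $j$-subfaces of $F_i$ that have not yet been listed. Purity is automatic from the definition of $\Delta^{[j]}$; the substance is to verify that $\gen{G_\ell}\cap\gen{G_1,\ldots,G_{\ell-1}}$ is pure of dimension $j-1$ for each $\ell\geq 2$.

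Verifying this purity condition is the main obstacle. Say $G_\ell$ is first introduced while processing $F_i$, so $G_\ell\subseteq F_i$. I need to show that every $(j-1)$-subface of $G_\ell$ that is covered by some earlier $G_{\ell'}$ is in fact covered by a $(j-1)$-subface of the form $G_\ell\cap G_{\ell''}$ with $\ell''<\ell$. If $G_{\ell'}\subseteq F_i$, the claim reduces to the shellability of the boundary of the simplex $F_i$, which is handled by ordering the batch of $j$-subfaces of $F_i$ appropriately. If instead $G_{\ell'}\subseteq F_{i'}$ with $i'<i$, one uses the shelling hypothesis on $\Delta$: the intersection $\gen{F_i}\cap\gen{F_1,\ldots,F_{i-1}}$ is pure of codimension one in $F_i$, and one checks that intersecting this pure complex with the $j$-skeleton of $F_i$ again yields a pure complex of codimension one in $F_i$'s $j$-skeleton. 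This is the step requiring the most care, since it is not entirely formal that restricting a pure complex to a lower skeleton preserves purity; one has to exploit the specific structure coming from $\gen{F_i}\cap\gen{F_1,\ldots,F_{i-1}}$ being generated by the facets $F_i\cap F_{i'}$.

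Once each $\Delta^{[j]}$ is shown to be pure shellable, the Reisner/Stanley theorem yields that $\Delta^{[j]}$ is Cohen-Macaulay over $k$, and Duval's characterization then delivers that $\Delta$ itself is sequentially Cohen-Macaulay, completing the proof.
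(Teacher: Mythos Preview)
The paper does not supply a proof of this proposition; it is quoted as a known result with a reference to Stanley's book, so there is no argument in the paper to compare yours against.

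Your outline is the standard route and is correct: Duval's characterization reduces sequential Cohen--Macaulayness to ordinary Cohen--Macaulayness of each pure skeleton $\Delta^{[j]}$, and the combinatorial core --- that a non-pure shelling of $\Delta$ induces a pure shelling of every $\Delta^{[j]}$ --- is precisely the lemma proved by Bj\"orner and Wachs. One small imprecision: when $\dim F_i > j$, the relevant auxiliary order is a shelling of the $j$-\emph{skeleton} of the simplex on $F_i$, not of its boundary (the two coincide only when $j=\dim F_i-1$). The cleanest bookkeeping uses the restriction set $R(F_i)$: the $j$-faces newly contributed by $F_i$ are exactly the $(j{+}1)$-subsets of $F_i$ containing $R(F_i)$, and for any such $G$ its intersection with what came before is generated by the codimension-one faces $G\setminus\{v\}$ with $v\in R(F_i)$, together with intersections with earlier new $j$-faces from $F_i$; choosing, say, a lexicographic order on the new batch then makes each of these intersections pure of dimension $j-1$. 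You have correctly located where the real work lies, and with this adjustment the argument goes through.
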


We now move to the strong gcd-condition.
\begin{definition}[J\"ollenbeck \cite{jol}] \label{def:sgcd}
A simplicial complex $\Delta$ is said to satisfy the \defn{strong gcd-condition} if the set of minimal non-faces of $\Delta$ admits a \defn{strong gcd-order}. A strong gcd-order is a linear order, $M_1,\ldots,M_r$, of the minimal non-faces of $\Delta$ such that whenever $1\leq i<j \leq r$ and $M_i\cap M_j = \emptyset$, there is a $k$ with $i<k\ne j$ such that $M_k \subseteq M_i\cup M_j$.
\end{definition}

The strong gcd-condition was introduced because of its relation to the Golod property. In \cite{jol}, J\"ollenbeck made a conjecture a consequence of which was that the strong gcd-condition is sufficient for verifying that a complex is Golod. One of the main results of the paper \cite{BJ} was a proof of that conjecture, thus establishing the truth of the next proposition.

\begin{proposition}[Berglund, J\"ollenbeck \cite{BJ}] \label{prop:sgcd-Golod}
A simplicial complex satisfying the strong gcd-condition is Golod.
\end{proposition}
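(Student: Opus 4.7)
The plan is to establish Golodness of $k[\Delta]$ by producing a free resolution of $k$ over $k[\Delta]$ whose Betti numbers realize the Serre upper bound
\[
P^{k[\Delta]}_k(t) \;=\; \frac{(1+t)^n}{1 - t\bigl(P^S_{k[\Delta]}(t) - 1\bigr)}, \qquad S = k[x_1,\ldots,x_n],
\]
or, equivalently, showing that all higher Massey products on the Koszul homology of $k[\Delta]$ vanish. The tool is algebraic discrete Morse theory, applied either to the bar resolution of $k$ over $k[\Delta]$ or, after a standard reduction, to the Taylor complex resolving $k[\Delta]$ over $S$. Either way, the idea is to build an acyclic matching using the strong gcd-order whose critical cells enumerate exactly the generators of a resolution attaining the Serre bound.

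The matching is guided by the order $M_1,\ldots,M_r$ on minimal non-faces. Call an index $k$ \emph{movable} at a subset $T \subseteq \{M_1,\ldots,M_r\}$ if $M_k \subseteq \bigcup_{M \in T\setminus\{M_k\}} M$, so that toggling $M_k$ preserves multidegree. For each $T$ admitting a movable index, take the least such $k$ and match $T$ with $T \triangle \{M_k\}$. The strong gcd-condition is precisely the hypothesis that makes this matching rich enough: whenever $T$ contains a disjoint pair $M_i, M_j$, Definition \ref{def:sgcd} supplies a witness $M_k \subseteq M_i \cup M_j$ with $k > i$ and $k \ne j$, which is movable at $T$. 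Consequently, the only unmatched subsets are those in which the $M_{i_t}$ pairwise intersect.

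The main obstacle is verifying acyclicity: reversing matched edges in the Hasse diagram of subsets must yield no directed cycle. The inequality $k > i$ is crucial here — it furnishes a lexicographic monovariant on indices that strictly decreases along each alternating zig-zag of matched and unmatched arrows, ruling out cycles. Once acyclicity is in hand, the discrete Morse machinery produces a Morse resolution whose critical cells are exactly the subsets of pairwise-intersecting minimal non-faces. A direct multigraded enumeration of these critical cells shows that their generating function matches the Serre-bound formula above, thereby establishing Golodness. The translation between the small resolution on the Koszul dual side and the vanishing of Massey products — the last technical piece — is standard and follows the general philosophy that suitably small resolutions force trivial higher operations.
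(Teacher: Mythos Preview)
The paper does not prove this proposition; it is quoted from \cite{BJ} as an established result, with no argument given here. There is therefore no in-paper proof to compare against, so let me assess your sketch on its own terms.

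The matching you describe --- ``toggle the least movable index'' --- is not an involution, so it is not a matching at all. Take $M_1=\{1,2,3\}$, $M_2=\{1,4\}$, $M_3=\{2,3,4\}$; these are pairwise intersecting and incomparable, hence a legitimate set of minimal non-faces, and any linear order is trivially a strong gcd-order. At $T=\{M_2,M_3\}$ the least movable index is $1$ (since $M_1\subseteq M_2\cup M_3$), so your rule sends $T$ to $\{M_1,M_2,M_3\}$. At $T'=\{M_1,M_2\}$ neither $1$ nor $2$ is movable, but $3$ is (since $M_3\subseteq M_1\cup M_2$), so $T'$ is \emph{also} sent to $\{M_1,M_2,M_3\}$; the same happens to $\{M_1,M_3\}$ via index $2$. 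Three distinct cells cannot all be matched to the same target. The failure mode is that adjoining $M_k$ to $T$ can create a new, smaller movable index, so the rule does not pair cells consistently.

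Even granting a repaired matching, the remaining steps are assertions rather than arguments. You establish only the one-sided containment ``critical $\Rightarrow$ pairwise intersecting'', so the critical cells are not actually identified and the ``direct multigraded enumeration'' recovering the Serre bound cannot be carried out as stated. The acyclicity ``lexicographic monovariant'' is never constructed; in practice this is exactly the delicate step, and the inequality $k>i$ in Definition~\ref{def:sgcd} does not by itself hand you a potential function on zig-zag paths. Finally, the passage from ``small resolution'' to ``trivial Massey products'' is not automatic in the direction you need. Your outline has the right flavor --- the line of work in \cite{jol}, \cite{BJ} does exploit algebraic discrete Morse theory guided by the strong gcd-order --- but the specific construction and the bookkeeping there are substantially more careful than what you have written.
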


The following result ties together the notions of sequentially Cohen-Macaulay rings and Golod rings, via the \emph{Alexander dual}. Recall that the Alexander dual of $\Delta$ is the simplicial complex
$$\dual{\Delta} = \set{F\subseteq V}{\c{F} \not\in \Delta}.$$
Here and henceforth $\c{F}$ denotes the complement of $F$ in $V$. The facets of $\dual{\Delta}$ are the complements in $V$ of the minimal non-faces of $\Delta$.

\begin{proposition}[Herzog, Reiner, Welker \cite{HRW}] \label{prop:CM-Golod}
If the Alexander dual $\dual{\Delta}$ is sequentially Cohen-Macaulay, then $\Delta$ is Golod.
\end{proposition}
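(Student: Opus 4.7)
My plan is to prove this algebraically, reducing the combinatorial hypothesis on $\dual{\Delta}$ to a homological condition on the Stanley-Reisner ideal $I_\Delta$, and then showing that condition forces Golodness.

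First, I would invoke the refinement of the Eagon--Reiner theorem due to Herzog and Hibi: $\dual{\Delta}$ is sequentially Cohen-Macaulay if and only if $I_\Delta\subseteq S=k[x_1,\ldots,x_n]$ is \emph{componentwise linear}, meaning that for every degree $d$ the ideal generated by the degree-$d$ elements of $I_\Delta$ admits a linear $S$-resolution. This replaces the combinatorial hypothesis with a purely homological one and reduces the proposition to the algebraic assertion: every componentwise linear squarefree monomial ideal $I$ has Golod quotient $S/I$.

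Second, to deduce Golodness of $S/I$ from componentwise linearity I would use the Poincar\'e series criterion: $S/I$ is Golod if and only if
$$P^{S/I}_k(t) \;=\; \frac{(1+t)^n}{1 - t\bigl(P^{S}_{S/I}(t) - 1\bigr)}.$$
Componentwise linearity pins down $P^{S}_{S/I}(t)$ degreewise, via the short exact sequences relating the successive degree strands of $I_\Delta$ and the fact that each strand has a linear $S$-resolution. With that series in hand, I would construct a candidate minimal free resolution of $k$ over $S/I$ by iteratively forming mapping cones of the linear strands of $I$, and verify that its Hilbert series exactly realizes the right-hand side above.

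The main obstacle is the \emph{minimality} of this mapping-cone resolution: without it one obtains only the inequality $P^{S/I}_k(t) \le \tfrac{(1+t)^n}{1 - t(P^{S}_{S/I}(t) - 1)}$, not the Golod equality. Dually, Golodness amounts to the vanishing of all products and higher Massey products on the Koszul homology $H_*(K_S\otimes_S S/I)$, and the crucial point is that componentwise linearity confines every Koszul cycle to the linear strand of its generating degree, so any prospective nonzero product or Massey product would have to live in a multidegree where linearity of that strand leaves no room for a nonzero class. Carrying out this multigraded degree chase — either directly on Massey products or indirectly through the cone construction — is where I expect the bulk of the work to lie.
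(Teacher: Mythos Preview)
The paper does not supply a proof of this proposition: it is stated as a result of Herzog, Reiner and Welker and simply cited from \cite{HRW} without argument. There is therefore no in-paper proof to compare your proposal against.

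As to the proposal itself: your outline is the natural route and indeed tracks the strategy of \cite{HRW}, whose very title is \emph{Componentwise linear ideals and Golod rings}. The reduction of sequential Cohen--Macaulayness of $\dual{\Delta}$ to componentwise linearity of $I_\Delta$ via the Herzog--Hibi refinement of Eagon--Reiner is correct, and the remaining task---componentwise linear implies Golod---is precisely what \cite{HRW} establishes. However, your sketch openly leaves the decisive step unfinished: you yourself flag the vanishing of Massey products (equivalently, the minimality of your mapping-cone resolution) as the ``main obstacle'' and describe only in broad terms how a multidegree chase should force it. That step is where all the content lies, and without carrying it out what you have is a plan of attack rather than a proof.
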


What we have said so far can be summarized by the following diagram of implications:
$$\xymatrix{\mbox{$\dual{\Delta}$ shellable} \ar@{==>}[r] \ar@{=>}[d] & \mbox{$\Delta$ strong gcd} \ar@{=>}[d] \\ \mbox{$\dual{\Delta}$ seq.\ CM} \ar@{=>}[r] & \mbox{$\Delta$ Golod}}$$

This diagram seems to indicate that the strong gcd-condition plays the same role for the Golod property as shellability does for the property of being sequentially Cohen-Macaulay. What we wish to do next is to tie together the accompanying combinatorial notions by proving the implication represented by the dashed arrow. After that, we will give examples of simplicial complexes, $\Delta_1$, $\Delta_2$ and $\Delta_3$, having the following configurations of truth values in the diagram: \\
\begin{center}
\begin{tabular}{|c|c|c|}
\hline
$\Delta_1$ & $\Delta_2$ & $\Delta_3$ \\
\hline
$\begin{array}{cc} F & T \\ F & T \end{array}$ & $\begin{array}{cc} F & T \\ T & T \end{array}$ & $\begin{array}{cc} F & F \\ F & T \end{array}$\\
\hline
\end{tabular}
\end{center}
\vspace{10pt}
\noindent In particular, all implications in the diagram are strict. However, we will finish by proving that if $\Delta$ is a flag complex, then all arrows are in fact equivalences.

\section*{Weak shellability}

\begin{proposition} \label{prop:shellable-sgcd}
If $\dual{\Delta}$ is shellable then $\Delta$ satisfies the strong gcd-condition.
\end{proposition}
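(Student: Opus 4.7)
The plan is to take a shelling order $G_1,\ldots,G_r$ of $\dual{\Delta}$ and read it backwards. Since the facets of $\dual{\Delta}$ are exactly the complements $\c{M}$ of the minimal non-faces $M$ of $\Delta$, setting $M_i = \c{G_{r+1-i}}$ produces a linear ordering of the minimal non-faces of $\Delta$, and the goal is to verify the strong gcd-condition for this ordering.

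First I would reformulate shellability in a usable form. For any $a<b$ the face $G_a\cap G_b$ lies in $\gen{G_b}\cap\gen{G_1,\ldots,G_{b-1}}$, and purity of this intersection in dimension $\dim G_b - 1$ provides a facet of it containing $G_a\cap G_b$; such a facet has the form $G_b\setminus\{v\}$ for some $v\in G_b$ and is contained in some $G_k$ with $k<b$. Unravelling this yields: for all $a<b$ there exist $v\in G_b\setminus G_a$ and $k<b$ with $G_b\setminus\{v\}\subseteq G_k$. Passing to complements via $G_i=\c{M_{r+1-i}}$, the statement becomes: for all $a<b$ there exist $v\in M_{r+1-a}\setminus M_{r+1-b}$ and $k<b$ with $M_{r+1-k}\subseteq M_{r+1-b}\cup\{v\}$.

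To verify the strong gcd-condition I would then take $i<j$ with $M_i\cap M_j=\emptyset$ and apply the reformulation with $a=r+1-j$ and $b=r+1-i$. It outputs a vertex $v\in M_j\setminus M_i=M_j$ and an index $k<r+1-i$ with $M_{r+1-k}\subseteq M_i\cup\{v\}\subseteq M_i\cup M_j$. Setting $k'=r+1-k$ gives $k'>i$ and $M_{k'}\subseteq M_i\cup M_j$, which is essentially what the strong gcd-condition demands.

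The one step I expect to need care is showing $k'\neq j$, and this is where the no-ghost-vertex hypothesis enters. If $k'=j$, then $M_j\subseteq M_i\cup\{v\}$ together with $v\in M_j$ and $M_i\cap M_j=\emptyset$ would force $M_j=\{v\}$; but every singleton is a face of $\Delta$, so a minimal non-face cannot consist of a single vertex. This contradiction gives $k'\ne j$ and completes the verification.
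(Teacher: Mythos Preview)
Your proof is correct and follows essentially the same approach as the paper: reverse a shelling order of $\dual{\Delta}$, use the shelling condition to produce the required intermediate set, and invoke the no-ghost-vertex hypothesis to rule out the degenerate index. The only cosmetic difference is that you carry out the argument on the side of the minimal non-faces $M_i$ (showing $M_j\ne\{v\}$), whereas the paper works dually with the facets $F_i=\c{M_i}$ (using $|F_i\cap F_j|\le |F_j|-2$ to show the codimension-one face $H$ cannot lie in $F_i$); these are the same obstruction read through complementation.
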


\begin{proof}
Let $F_1,\ldots, F_r$ be a shelling order of the facets of $\dual{\Delta}$. The minimal non-faces of $\Delta$ are then $\c{F_1},\ldots, \c{F_r}$. We claim that the reversed order, $\c{F_r},\ldots,\c{F_1}$, is a strong gcd-order for $\Delta$. By the standing assumption that $\Delta$ has no ghost-vertices, $|\c{F_i}| \geq 2$, or in other words $|F_i|\leq |V|-2$, for all $i$.

Let $1\leq i<j \leq r$ and suppose that $\c{F_i} \cap \c{F_j} = \emptyset$. We must produce a $k$ with $i\ne k < j$ such that $\c{F_k}\subseteq \c{F_i}\cup\c{F_j}$. The assumption means that $F_i\cup F_j = V$. Combining this with the fact $|F_i|\leq |V|-2$, we get
$$|F_i\cap F_j| \leq |F_j|-2.$$
Since $F_1,\ldots, F_r$ is a shelling order, the complex
$$\gen{F_j}\cap\gen{F_1,\ldots,F_{j-1}}$$
is pure of dimension $\dim(F_j)-1$. Of course, $F_i\cap F_j$ is contained in this complex. Let $H$ be a facet of the complex containing $F_i\cap F_j$. Then $|H| = |F_j|-1$. If $H\subseteq F_i$, then $H\subseteq F_i\cap F_j$, but this is impossible since $|F_i\cap F_j|\leq |F_j|-2$. Therefore, $H$ is contained in some $F_k$ where $i\ne k < j$. Hence, $F_i\cap F_j\subseteq H \subseteq F_k$, which implies that $\c{F_k} \subseteq \c{F_i}\cup\c{F_j}$. This finishes the proof.
\end{proof}

By using the correspondence between minimal non-faces of $\Delta$ and facets of $\dual{\Delta}$, one can rephrase the strong gcd-condition as a property of $\dual{\Delta}$ in the following way:

\begin{definition} \label{def:weakly shellable}
A simplicial complex $\Delta$ is called \defn{weakly shellable} if the facets of $\Delta$ admit a \emph{weak shelling order}. A weak shelling order is a linear order, $F_1,\ldots,F_r$ of the facets of $\Delta$ such that if $1\leq i<j\leq r$ and $F_i\cup F_j = V$ then there is a $k$ with $i\ne k < j$ such that $F_i\cap F_j\subseteq F_k$.
\end{definition}
Then the following is clear by definition:
\begin{proposition} \label{prop:ws-sgcd}
Let $\Delta$ be a simplicial complex and let $M_1,\ldots, M_r$ be its minimal non-faces. Then the facets of $\dual{\Delta}$ are $F_i = \c{M_i}$, $i=1,\ldots,r$, and the order $M_1,\ldots,M_r$ is a strong gcd-order if and only if $F_r,F_{r-1},\ldots,F_1$ is a weak shelling order.
\end{proposition}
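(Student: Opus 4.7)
The plan is to prove the statement by a straightforward definition chase, translating the strong gcd-condition on $M_1,\ldots,M_r$ into the weak shelling condition on $F_r,\ldots,F_1$ via complementation in $V$. The first assertion, that the facets of $\dual{\Delta}$ are exactly the complements $\c{M_i}$ of the minimal non-faces of $\Delta$, is already recorded immediately before Proposition~\ref{prop:shellable-sgcd}, so there is nothing further to verify there.

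For the main equivalence, I would first record two elementary set-theoretic identities, valid for all $i,j,k$: $M_i\cap M_j=\emptyset$ if and only if $F_i\cup F_j=V$, and $M_k\subseteq M_i\cup M_j$ if and only if $F_i\cap F_j\subseteq F_k$. These translate both the hypothesis and the conclusion of each clause of the strong gcd-condition into the corresponding conditions on the facets $F_i$ of $\dual{\Delta}$.

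The only remaining task is to keep track of the index reversal. Position $p$ in the list $F_r,F_{r-1},\ldots,F_1$ holds the facet $F_{r-p+1}$, so a pair of positions $p<q$ in the reversed order corresponds, via $a=r-q+1$ and $b=r-p+1$, to the pair of original indices $a<b$. Under the same substitution, the phrase "some index $k$ with $p\ne k<q$" becomes "some index $c$ with $a<c$ and $c\ne b$", which is exactly the index requirement in the strong gcd-condition applied to the pair $M_a,M_b$. Combining these translations, the weak shelling clause for the pair at positions $p,q$ of the reversed order reads, clause for clause, as the strong gcd-clause for the pair $M_a,M_b$, and conversely. Hence $M_1,\ldots,M_r$ is a strong gcd-order precisely when $F_r,\ldots,F_1$ is a weak shelling order. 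The only potential pitfall is the bookkeeping involved in reversing the indices, and this is routine.
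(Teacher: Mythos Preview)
Your proof is correct and is precisely the definition chase the paper has in mind; the paper itself offers no argument beyond the phrase ``clear by definition,'' and your write-up simply unpacks that via De Morgan's laws and the index reversal. There is nothing to compare or correct.
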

In fact, the proof of Proposition \ref{prop:shellable-sgcd} shows the following:
\begin{proposition} \label{prop:justification}
Let $\Delta$ be a simplicial complex such that $|F|\leq |V|-2$ for all $F\in \Delta$. Then any shelling order of the facets of $\Delta$ is a weak shelling order.
\end{proposition}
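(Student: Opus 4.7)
The plan is to reproduce the argument of Proposition~\ref{prop:shellable-sgcd} almost verbatim, but applied directly to the facets $F_i$ themselves rather than to their complements. In fact, if one unwinds that earlier proof, one sees it never really used that the $F_i$ were facets of $\dual{\Delta}$; the only inputs were the shellability of the ambient order, the combinatorial identity $\c{F_i}\cap \c{F_j}=\emptyset \iff F_i\cup F_j=V$, and the size bound $|F_i|\le |V|-2$. So my proof of Proposition~\ref{prop:justification} will essentially extract that content.

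Concretely, let $F_1,\ldots,F_r$ be a shelling order of the facets of $\Delta$, fix $1\le i<j\le r$, and suppose $F_i\cup F_j=V$. The hypothesis $|F_i|\le |V|-2$ gives
\[
|F_i\cap F_j|=|F_i|+|F_j|-|F_i\cup F_j|=|F_i|+|F_j|-|V|\le |F_j|-2.
\]
Since $F_1,\ldots,F_r$ is a shelling order, the complex $\gen{F_j}\cap \gen{F_1,\ldots,F_{j-1}}$ is pure of dimension $\dim(F_j)-1$. It contains the face $F_i\cap F_j$, so I can enlarge $F_i\cap F_j$ to a facet $H$ of this subcomplex, with $|H|=|F_j|-1$.

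The key step is to rule out $H\subseteq F_i$: if $H\subseteq F_i$, then $H\subseteq F_i\cap F_j$, which is impossible because $|H|=|F_j|-1>|F_i\cap F_j|$. Hence, by the definition of $\gen{F_1,\ldots,F_{j-1}}$, there is some $k<j$ with $k\ne i$ such that $H\subseteq F_k$, and then $F_i\cap F_j\subseteq H\subseteq F_k$, which is exactly the weak shelling condition.

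There is no serious obstacle here — the whole proof is a direct transcription of the argument already given for Proposition~\ref{prop:shellable-sgcd}; the only substantive point to double-check is the size computation $|F_i\cap F_j|\le |F_j|-2$, which is where the hypothesis $|F|\le |V|-2$ is used.
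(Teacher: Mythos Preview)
Your proof is correct and is exactly the approach the paper takes: the paper does not give a separate argument for Proposition~\ref{prop:justification} but simply observes that the proof of Proposition~\ref{prop:shellable-sgcd} already establishes it, and you have written out that extraction explicitly. The key ingredients you identify---the shelling property of $\gen{F_j}\cap\gen{F_1,\ldots,F_{j-1}}$, the size bound $|F_i\cap F_j|\le |F_j|-2$ coming from $|F_i|\le |V|-2$, and the resulting exclusion $H\not\subseteq F_i$---match the paper's reasoning line for line.
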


\begin{remark}
Note that if $\Delta$ is a $d$-dimensional simplicial complex with $|V|\geq 2d + 3$, then $\Delta$ is automatically weakly shellable because in this case $|F\cup G|<|V|$ for all faces $F,G\in \Delta$.
\end{remark}

\section*{Examples}
\begin{example}
Let $\Delta_1$ be the simplicial complex with vertex set $\{1,2,3,4,5,6\}$ and minimal non-faces $\{1,2,3\}$, $\{1,2,6\}$, $\{4,5,6\}$. The Alexander dual $\Delta_1^\vee$ has facets $\{1,2,3\}$, $\{3,4,5\}$, $\{4,5,6\}$, and it is not Cohen-Macaulay because the link of the vertex $3$ is one-dimensional but not connected. However, the order in which the minimal non-faces of $\Delta_1$ appear above is in fact a strong gcd-order.
\end{example}

\begin{example}
Let $\Delta_2^\vee$ be the triangulation of the `dunce hat' with vertices $1,2,\ldots,8$ and facets
\begin{align*}
\{1,2,4\}, \{1,2,7\}, \{1,2,8\}, \{1,3,4\}, \{1,3,5\}, \{1,3,6\}, \{1,5,6\}, \{1,7,8\}, \{2,3,5\}, \\
\{2,3,7\}, \{2,3,8\}, \{2,4,5\}, \{3,4,8\}, \{3,6,7\}, \{4,5,6\}, \{4,6,8\}, \{6,7,8\}.
\end{align*}
It is well known that any triangulation of the dunce hat is Cohen-Macaulay but not shellable. Furthermore, for this particular triangulation, $|V| = 8\geq 7 = 2\dim(\Delta_2^\vee) + 3$, so $\Delta_2^\vee$ is automatically weakly shellable, which means that $\Delta_2$ satisfies the strong gcd-condition.
\end{example}

\begin{example}
Let $\Delta_3$ be the simplicial complex with vertices $0,1,\ldots,9$ and minimal non-faces
$$\{0,1,5,6\},\{1,2,6,7\},\{2,3,7,8\},\{3,4,8,9\},\{0,4,5,9\},\{5,6,7,8,9\}.$$
One can check by a direct computation that this simplicial complex is Golod. However, the strong gcd-condition is violated.

Next, if the dual complex $\Delta_3^\vee$ were sequentially Cohen-Macaulay, then by \cite{stanley} Proposition III.2.10 the pure subcomplex $\Gamma$ generated by the facets of maximum dimension would be Cohen-Macaulay. The dual $\Gamma^\vee$ of this subcomplex has minimal non-faces
$$\{0,1,5,6\},\{1,2,6,7\},\{2,3,7,8\},\{3,4,8,9\},\{0,4,5,9\}.$$
We will argue that $\Gamma^\vee$ is not Golod, so that, by Proposition \ref{prop:CM-Golod}, $\Gamma$ cannot be Cohen-Macaulay. By \cite{GPW} Theorem 3.5 (b), the complex $\Gamma^\vee$ is Golod if and only if the complex $K$ with vertices $0,1,2,3,4$ and minimal non-faces
$$\{0,1\},\{1,2\},\{2,3\},\{3,4\},\{0,4\},$$
is Golod. $K$ is a triangulation of $S^1$ and is therefore Gorenstein$^*$. However, being Golod and Gorenstein$^*$ are mutually exclusive properties as soon as there are at least two minimal non-faces, see \cite{BJ}, so $K$ is not Golod.
\end{example}

The reader might wonder why we have not provided an example with the table
$$
\begin{array}{cc}
F & F \\ T & T
\end{array}
$$
The Alexander dual of a simplicial complex having this table would need to be a non-shellable sequentially Cohen-Macaulay complex with $|V| < 2d+3$. Already finding complexes meeting these specifications seems difficult: All but one of the examples of non-shellable Cohen-Macaulay complexes found in \cite{hachimori} satisfy $|V|\geq 2d+3$, and are therefore weakly shellable for trivial reasons. The exception is the classical $6$-vertex triangulation of the real projective plane, which is however easily seen to be weakly shellable. Also, Gr\"abe's example \cite{grabe} of a complex which is Gorenstein when the characteristic of the field $k$ is different from $2$ but not Gorenstein otherwise is weakly shellable. It has been shown that all 3-balls with fewer than $9$ vertices are extendably shellable, and that all 3-spheres with fewer than $10$ vertices are shellable, see \cite{lutz}, so there is no hope in finding an example there. The author would however be very surprised if no example existed.
\begin{problem}
Find a sequentially Cohen-Macaulay complex which is not weakly shellable.
\end{problem}

\section*{Flag complexes}
Recall that a flag complex is a simplicial complex all of whose minimal non-faces have two elements. Order complexes associated to partially ordered sets are important examples flag complexes. Note that the Alexander dual of a flag complex is pure, and for pure complexes sequentially Cohen-Macaulay means simply Cohen-Macaulay.

\begin{proposition} \label{prop:flag-converse}
Suppose that $\Delta$ is a flag complex. Then the following are equivalent:
\begin{enumerate}
\item $\dual{\Delta}$ is shellable. \label{shellable}
\item $\Delta$ satisfies the strong gcd-condition. \label{sgcd}
\item $\dual{\Delta}$ is Cohen-Macaulay. \label{CM}
\item $\Delta$ is Golod. \label{Golod}
\end{enumerate}
\end{proposition}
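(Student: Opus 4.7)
Most of the implications are already at hand. We have $(\ref{shellable})\Rightarrow(\ref{sgcd})$ from Proposition~\ref{prop:shellable-sgcd}; $(\ref{shellable})\Rightarrow(\ref{CM})$ from Proposition~\ref{prop:shellable-CM} together with the observation that $\dual{\Delta}$ is pure when $\Delta$ is flag, so sequentially Cohen-Macaulay collapses to Cohen-Macaulay; $(\ref{sgcd})\Rightarrow(\ref{Golod})$ from Proposition~\ref{prop:sgcd-Golod}; and $(\ref{CM})\Rightarrow(\ref{Golod})$ from Proposition~\ref{prop:CM-Golod}. My plan is to close the cycle of implications by supplying $(\ref{CM})\Rightarrow(\ref{shellable})$ via a direct perfect-elimination-ordering construction, and $(\ref{Golod})\Rightarrow(\ref{CM})$ by an algebraic input from \cite{BJ}.

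For $(\ref{CM})\Rightarrow(\ref{shellable})$: let $H$ denote the $1$-skeleton of $\Delta$. Since $\Delta$ is flag, $I_\Delta$ is the edge ideal of the complement of $H$; by the theorems of Fr\"oberg and Eagon--Reiner, $\dual{\Delta}$ being Cohen-Macaulay is equivalent to $H$ being chordal. Fix a perfect elimination ordering $v_1,\ldots,v_n$ of $H$, in the convention that for each $i$ the $H$-neighbors of $v_i$ lying in $\{v_{i+1},\ldots,v_n\}$ form a clique. Order the facets of $\dual{\Delta}$, which have the form $V\setminus\{v_a,v_b\}$ for non-edges $\{v_a,v_b\}$ of $H$ with $a<b$, lexicographically by $(a,b)$. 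I claim this is a shelling order. The verification reduces, exactly as in the proof of Proposition~\ref{prop:shellable-sgcd}, to showing: whenever $\{v_a,v_b\}$ and $\{v_c,v_d\}$ are disjoint non-edges of $H$ with $(a,b)$ lex-preceding $(c,d)$, some non-edge of $H$ containing one vertex from each pair itself lex-precedes $(c,d)$. Disjointness forces $a<c$; and were both $\{v_a,v_c\}$ and $\{v_a,v_d\}$ edges of $H$, the PEO clique property applied at $v_a$ would force $\{v_c,v_d\}$ to be an $H$-edge, contradicting the hypothesis. So at least one of those two pairs is a non-edge of $H$, and having $\min$-index $a<c$ it lex-precedes $(c,d)$ as required.

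For $(\ref{Golod})\Rightarrow(\ref{CM})$: I appeal to the characterization, established in \cite{BJ}, that for a flag complex $\Delta$ the ring $k[\Delta]$ is Golod if and only if $I_\Delta$ has a $2$-linear resolution. Via Eagon--Reiner this is exactly $(\ref{Golod})\Leftrightarrow(\ref{CM})$, so combined with the previous step we obtain the cycle $(\ref{shellable})\Rightarrow(\ref{sgcd})\Rightarrow(\ref{Golod})\Rightarrow(\ref{CM})\Rightarrow(\ref{shellable})$, completing all four equivalences.

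The main obstacle is this second step, since it relies on a structural theorem about edge ideals rather than a self-contained combinatorial argument. A more hands-on alternative is to argue the contrapositive: if $H$ is not chordal, take an induced cycle on $C\subseteq V$ of length $m\geq 4$; the induced subcomplex $\Delta_C$ is precisely this $m$-cycle, which is a triangulation of $S^1$ and hence Gorenstein$^*$ with more than one minimal non-face, and therefore not Golod by the Gorenstein$^*$/Golod dichotomy of \cite{BJ}. One would then need to verify that Golodness of $\Delta$ descends to $\Delta_C$, a consequence of Hochster's formula together with the characterization of Golodness via vanishing of Massey products on the Koszul homology of $k[\Delta]$, restricted to the multidegrees supported in $C$.
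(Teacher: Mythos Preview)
Your argument is correct, but it takes a different route from the paper. The paper closes the cycle by proving $(\ref{sgcd})\Rightarrow(\ref{shellable})$ directly and purely combinatorially: for a flag complex, any weak shelling order of the facets of $\dual{\Delta}$ is already a shelling order (Proposition~\ref{prop:weak-flag}). The proof is a short cardinality count showing that if $|F_i\cap F_j|\le |V|-4$ then the complements are disjoint, and the weak-shelling witness $F_k$ then necessarily satisfies $|F_k\cap F_j|\ge |V|-3$; the equivalence of $(\ref{sgcd})$, $(\ref{CM})$, $(\ref{Golod})$ is taken wholesale from \cite{BJ}. By contrast, you go through $(\ref{CM})\Rightarrow(\ref{shellable})$ by invoking Fr\"oberg and Eagon--Reiner to translate Cohen--Macaulayness of $\dual{\Delta}$ into chordality of the $1$-skeleton $H$, and then build an explicit shelling from a perfect elimination ordering. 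Your PEO verification is clean and correct (disjointness forces $a<c$, and the simplicial-neighborhood property at $v_a$ yields the needed earlier non-edge), but the approach imports two algebraic theorems that the paper does not use or cite. The paper's version has the advantage of being self-contained and of proving the stronger statement that \emph{every} weak shelling order is a shelling order, not just that one exists; your version has the advantage of producing an explicit, canonical shelling from a PEO. One small quibble: the phrase ``exactly as in the proof of Proposition~\ref{prop:shellable-sgcd}'' overstates the analogy---that proposition runs in the opposite direction---though the cardinality reduction to the disjoint-complement case is indeed the same.
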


\begin{proof}
For the equivalence of \eqref{sgcd}, \eqref{CM} and \eqref{Golod}, see \cite{BJ} Theorem 4. The implication \eqref{shellable} $\Rightarrow$ \eqref{sgcd} follows from Proposition \ref{prop:shellable-sgcd}. What remains to be verified is the implication \eqref{sgcd} $\Rightarrow$ \eqref{shellable} and this is contained in the next proposition.
\end{proof}

\begin{proposition} \label{prop:weak-flag}
If $\Delta$ is a flag complex then any weak shelling order of the facets of $\dual{\Delta}$ is a shelling order.
\end{proposition}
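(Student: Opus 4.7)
The plan is to exploit the extreme rigidity imposed by the flag hypothesis on the sizes of facets and pairwise intersections in $\dual{\Delta}$. Since every minimal non-face of $\Delta$ has exactly two elements, every facet of $\dual{\Delta}$ has cardinality $|V|-2$; in particular $\dual{\Delta}$ is pure of dimension $|V|-3$. So what I need to verify, given a weak shelling order $F_1,\ldots,F_r$ of the facets of $\dual{\Delta}$, is that for each $j\geq 2$ the intersection complex $\gen{F_j}\cap\gen{F_1,\ldots,F_{j-1}}$ is pure of dimension $|V|-4$, i.e.\ every face of it is contained in a face of cardinality $|V|-3$.

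Fix $j$ and fix a face $G$ of the intersection complex, so $G\subseteq F_i\cap F_j$ for some $i<j$. Writing $M_\ell = \c{F_\ell}$ for the corresponding minimal non-face of $\Delta$, I have $F_i\cap F_j = V\setminus (M_i\cup M_j)$, and $|M_i|=|M_j|=2$. If $M_i\cap M_j\ne\emptyset$, then $|M_i\cup M_j|=3$, so $F_i\cap F_j$ itself has $|V|-3$ elements and contains $G$: done. The interesting case is $M_i\cap M_j=\emptyset$, equivalently $F_i\cup F_j=V$, in which $|F_i\cap F_j|=|V|-4$ is too small to be a facet of the intersection complex of the required dimension.

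The main obstacle is precisely this ``disjoint'' case, and it is where the flag hypothesis combined with weak shellability does the work. Weak shellability gives a $k$ with $i\ne k<j$ and $F_i\cap F_j\subseteq F_k$, that is, $M_k\subseteq M_i\cup M_j$. Because $|M_k|=2$ and $M_k$ is a $2$-subset of the $4$-element set $M_i\cup M_j$ distinct from both $M_i$ and $M_j$, it must contain exactly one element from each of $M_i$ and $M_j$; in particular $M_k\cap M_j\ne\emptyset$. By the computation of the previous paragraph applied to the pair $(k,j)$, this forces $|F_k\cap F_j|=|V|-3$. Since $G\subseteq F_i\cap F_j\subseteq F_k$ and $G\subseteq F_j$, I conclude $G\subseteq F_k\cap F_j$, and $F_k\cap F_j$ is a face of the intersection complex $\gen{F_j}\cap\gen{F_1,\ldots,F_{j-1}}$ of the correct size $|V|-3$. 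This exhausts both cases and establishes the shelling property.
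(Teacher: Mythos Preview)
Your proof is correct and follows essentially the same approach as the paper's: both arguments use that every $F_i$ has size $|V|-2$, split into the cases $M_i\cap M_j\ne\emptyset$ (where $|F_i\cap F_j|=|V|-3$ already) and $M_i\cap M_j=\emptyset$ (where weak shellability furnishes $k$ with $M_k\subseteq M_i\cup M_j$), and then use $M_k\ne M_i$ to force $M_k\cap M_j\ne\emptyset$, yielding $|F_k\cap F_j|=|V|-3$. The only cosmetic difference is that the paper phrases the conclusion as ``$F_i\cap F_j$ is not maximal'' while you phrase it as ``every face lies in one of size $|V|-3$''; these are equivalent.
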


\begin{proof}
Let $F_1,\ldots,F_r$ be a weak shelling order of the facets of $\dual{\Delta}$. The complements $\c{F_1},\ldots,\c{F_r}$ are the minimal non-faces of the flag complex $\Delta$, so $|\c{F_i}| = 2$ and $|F_i| = |V|-2$ for all $i$. Let $j\geq 2$ and consider the complex
$$\gen{F_j}\cap\gen{F_1,\ldots,F_{j-1}}.$$
We want to show that it is pure of dimension $\dim(F_j)-1 = |V|-4$. The facets therein are the maximal elements in the set of all intersections $F_i\cap F_j$, where $i<j$. Clearly, $|F_i\cap F_j|\leq |V|-3$, since otherwise $F_i = F_i\cap F_j = F_j$. Suppose that $|F_i\cap F_j| \leq |V| - 4$. We will show that $F_i\cap F_j$ is not maximal. Indeed, we have that
$$|V|-4\geq |F_i\cap F_j| = |F_i|+|F_j| - |F_i\cup F_j| = 2|V|-4 - |F_i\cup F_j|,$$
which implies that $|F_i\cup F_j|\geq |V|$, whence $F_i\cup F_j = V$. By the definition of a weak shelling order, there is a $k$ with $i\ne k < j$ such that $F_i\cap F_j\subseteq F_k$. Say $\c{F_i} = \{v_i,w_i\}$, $\c{F_j} = \{v_j,w_j\}$ and $\c{F_k} = \{v_k,w_k\}$. Then $\{v_k,w_k\}\subseteq \{v_i,w_i,v_j,w_j\}$. Since the facets $F_i$ and $F_k$ are distinct either $v_k$ or $w_k$ is in $\{v_j,w_j\}$. This means that $|\c{F_k}\cup\c{F_j}|\leq 3$, that is, $|F_k\cap F_j|\geq |V|-3$. Hence $F_i\cap F_j$ is a proper subset of $F_k\cap F_j$, so it is not maximal.
\end{proof}


\begin{thebibliography}{99}
\bibitem{Avramov2} L. L. Avramov, {\it Infinite free resolutions}, Six lectures on commutative algebra (Bellaterra, 1996), 1--118, Progr. Math., 166, Birkh\"auser, Basel, 1998.

\bibitem{B} A. Berglund, {\it Poincar\'e series of monomial rings}, J. Algebra {\bf 295} (2006), no.1, 211--230.

\bibitem{BBH} A. Berglund, J. Blasiak, P. Hersh, {\it Combinatorics of multigraded {P}oincar\'e series for monomial rings}
J. Algebra {\bf 308} (2007), no.1, 73--90.

\bibitem{BJ} A. Berglund, M. J\"ollenbeck, {\it On the Golod property of Stanley-Reisner rings}, J. Algebra {\bf 315} (2007), no. 1, 249--273.

\bibitem{BW} A. Bj\"orner, M. Wachs, {\it Shellable nonpure complexes and posets. {I}}, Trans. Amer. Math. Soc. {\bf 348} , No. 4, (1996), 1299--1327.

\bibitem{GPW} V. Gasharov, I. Peeva, V. Welker, {\it The lcm-lattice in monomial resolutions},  Math. Res. Lett.  {\bf 6} (1999), no. 5-6, 521--532.

\bibitem{grabe} H.-G. Gr\"abe, {\it The Gorenstein property depends on characteristic}, Beitr\"age Algebra Geom. {\bf 17} (1984), 169--174.

\bibitem{hachimori} M. Hachimori, {\it Simplicial complex library}, \\ \verb+http://infoshako.sk.tsukuba.ac.jp/%7Ehachi/math/library/index_eng.html+

\bibitem{HRW} J. Herzog, V. Reiner, V. Welker, {\it Componentwise linear
ideals and Golod rings}, Mich. Math. J. {\bf 46} (1999), 211-223.

\bibitem{jol} M. J\"ollenbeck, {\it On the multigraded Hilbert- and Poincar\a'e
series of Monomial Rings}, J. Pure Appl. Algebra {\bf 207}, No.2, (2006), p. 261-298.

\bibitem{lutz} F. H. Lutz, {\it Combinatorial 3-manifolds with 10 vertices}, Beitr\"age Algebra Geom. {\bf 49} (2008), no. 1, 97--106.

\bibitem{stanley} R. Stanley, {\it Combinatorics and Commutative Algebra}, 2nd ed., Progress in Math., vol. 41, 1996, Birkh\"auser Boston Inc.
\end{thebibliography}
\end{document}